\documentclass{amsart}

\usepackage{amsthm,amsmath,amssymb,mathcomp}
\newtheorem{theorem}{Theorem}[section]
\newtheorem{lemma}[theorem]{Lemma}
\newtheorem{proposition}[theorem]{Proposition}
\theoremstyle{definition}
\newtheorem{definition}[theorem]{Definition}

\theoremstyle{remark}

\numberwithin{equation}{section}


\numberwithin{equation}{section}

\def\ca{{\mathcal A}}
\def\cb{{\mathcal B}}
\def\cc{{\mathcal C}}

\def\cf{{\mathcal F}}

\def\bc{{\mathbb C}}

\def\a{\alpha}

\def\d{\delta}

\def\g{\gamma}

\def\l{\lambda}


\def\r{\rho}
\def\s{\sigma}


\begin{document}

 \title[Complete boundedness of the Schur product]{On the complete boundedness of the Schur block product}

\author[Erik Christensen]{Erik Christensen}

\address{Institute for Mathematical Sciences, Universitetsparken 5, 2100 Copenhagen, Denmark}

\email{echris@math.ku.dk}

\subjclass[2010]{ Primary: 15A69, 46L07, 81P68. Secondary: 46N50, 47L25, 81T05.}
\keywords{Matrix, Schur product, Hadamard product, operator space, completely bounded, quantum channel}

\date{\today}

\commby{Stephan Ramon Garcia}

\begin{abstract}
We give a  {\em Stinespring representation } of the Schur block product on pairs of square matrices with  entries in a C$^*$-algebra as a completely bounded bilinear operator of the form:
$$ A:=(a_{ij}), B:= (b_{ij}):  A \square B := (a_{ij}b_{ij}) = V^*\l(A)F \l(B) V,$$
such that $V$ is an isometry, $\l$ is a *-representation and $F$ is a self-adjoint unitary. This implies an inequality due to Livshits and 2 more ones, apparently new, on the diagonals of matrices:
\begin{align*}
\|A \square B\| &\leq \|A\|_r \|B\|_c \text{ operator, row and column norm; } \\ - \mathrm{diag}(A^*A) &\leq A^*\square A \leq  \mathrm{diag}(A^*A), \\
\forall \Xi, \Gamma \in \bc^n\otimes H: \, |\langle (A
\square B) \Xi, \Gamma \rangle| & \leq   \|\big(\mathrm{diag}(B^*B)\big)^{1/2}\Xi\| \|\big(\mathrm{diag}(AA^*)\big)^{1/2}\Gamma\|.
\end{align*}
 \end{abstract}

\maketitle

\section{Introduction}

The {\em Hadamard} or {\em Schur} product between a pair of  scalar matrices of the same shape has been studied for  more than 100 years \cite{Sc, HJ},  and it is closely related to basic mathematical  subjects such as matrix theory and  representation theory. The product also has a natural interest for operator theorists  \cite{Be}, operator algebraists \cite{Pi1} and it is also  used in the study of quantum channels, \cite{AS}. The usage of the names Hadamard and Schur in connection with this product has varied in the literature, and nice expositions on the history behind the use of the names are to be found in  Horn \cite{Ho}, section 2 and Horn \&  Johnson \cite{HJ}, section 5.0.  

  In connection with the theory of {\em operator spaces } and completely bounded mappings on operator algebras \cite{ER, Ha,  Pa,  Pi2} it is obvious to ask questions on the generalization of the Schur product to square  matrices over a C$^*$-algebra.
This extension of the classical Hadamard or Schur product already exists in the theory for matrices and linear algebra \cite{ HM2, HMN, Li}, and our present article extends especially results by Horn, Mathias and Nakamura from \cite{HMN} and Livshits \cite{Li}. We will return to this point, when we have established some more notation. 
    
In \cite{CS} we discussed  a bilinear  mapping $\Phi,$ defined on  the product of a pair of C$^*$-algebras $\ca$, $\cb$ and mapping into  a C$^*$-algebra $\cc.$  We defined $\Phi$ to be completely bounded if there exists a positive constant $K$ such that for any natural number $k$ and for the bilinear operator $\Phi^k$ defined on the $k \times k$ matrices over the algebras $\ca, \cb$ denoted as  $M_k(\ca)$ and $M_k(\cb)  $ into $M_k(\cc)$ by
\begin{equation} \label{liftk}
 \forall A \in M_k(\ca) \, \forall B \in M_k(\cb)\, \forall i,j \in \{1, 2, \dots , k\}:  (\Phi^k(A,B))_{ij}\, :=\,\sum_{l=1}^k\Phi(a_{il},b_{lj}),
\end{equation}
we have $\|\Phi^k\| \leq K.$ 
If a bilinear operator $\Phi$ is completely bounded we define its completely bounded norm by $\|\Phi\|_{cb} := \sup\{\|\Phi^k\|\}.$ The main result of the article \cite{CS} is that a bilinear operator like $\Phi$ on a pair of C$^*$-algebras $\ca, \cb$ into a C$^*$-algebra $\cc$  acting on  a Hilbert space $H,$  is completely bounded if and only if there exist Hilbert spaces $K,\,L,$  $^*-$representations $\l$ of $\ca$ on $K$, $\rho$ of $\cb$ on $L$ and bounded operators $X$ in $B(K,H),$ $Y $ in $B(L,K), $ $Z$ in $B(H,L)$ such that 
\begin{equation} \label{Stine}
\forall a \in \ca \, \forall b \in \cb: \quad \Phi(a,b) = X\l(a)Y\rho(b) Z, \text{ and } \|\Phi\|_{cb} = \|X\|\|Y\|\|Z\|.
\end{equation} 
The decomposition of the bilinear operator $\Phi$ given in (\ref{Stine}) is called a {\em Stinespring representation }  of $\Phi$ in recognition of Stinespring's description of completely positive mappings on C$^*$-algebras, \cite{WS}. 

A linear operator between operator spaces is  defined to be completely bounded if all the natural extensions to matrices over the space are bounded by some fixed number. Given a scalar $ n\times  n $ matrix $A = (a_{ij}) \in M_n(\bc),$ it is  known  \cite{Ha, Sm} that the mapping $S_{A}$ on $M_n(\bc)$ which is induced by  Schur multiplication with $A$ on $M_n(\bc)$ is completely bounded and the completely bounded norm $\|S_{A}\|_{cb} $ equals its norm $\|S_A\|.$ 
    The aim of this article is to prove that for any operator algebra $\ca$ the associative product $\square$ on the algebra $M_n(\ca)$ of $n \times n$ matrices over $\ca,$ which usually is called the Schur product and is defined by $$  \forall A, B  \in M_n(\ca): \quad A\square B :=(a_{ij}b_{ij} ),$$  
is completely bounded with completely bounded norm 1, and that it has a natural decomposition as a difference of 2 {\em positive, } a term which will be explained below, bilinear mappings. We do this by providing an explicit and - in our opinion natural -  {\em Stinespring representation}  of $\square$  as a completely bounded  bilinear operator on $M_n(\ca)$ of norm 1. 
 
Here we pose a warning to avoid too much confusion. The operation $\square$ is defined on $M_n(\ca)$ and is  for any natural number $k $ lifted to an associative product $\square^k$ on $M_k(M_n(\ca))$ via the formula (\ref{liftk}), so we prove that $\sup\{\|\square^k\|\}$ is 1 by showing that if the algebra $\ca$ acts on a Hilbert space $H,$ and $M_n(\ca)$ acts on $\bc^n \otimes H,$ in the natural way,  then there exists  an isometry $V$ of $\bc^n \otimes H$ into $\bc^n \otimes H \otimes \bc^n ,$ a self-adjoijnt unitary $F$ on $\bc^n \otimes H \otimes \bc^n $ and a unital *-representation $\l $ of $M_n(B(H))$ on $\bc^n \otimes H \otimes \bc^n$ such that   
\begin{equation} \label{main}
\forall \, A, B \in \, M_n(\ca): \quad A\square B = V^*\l(A)F\l(B)V. \end{equation} 

\noindent
Since the self-adjoint unitary is a diifference of 2 complementary orthogonal projections $F = P - (I-P),$ we get from the equation (\ref{main}) that  
\begin{equation} \label{difference}
\forall \, A, B \in \, M_n(\ca): \quad A \square B  = V^*\l(A)P \l(B) V - V^*\l(A)(I-P)\l(B)V, \end{equation} 

\noindent so the  Schur block product is written in a natural way as a difference of 2 completely bounded bilinear  operators which, in a natural way, may be called {\em positive.  } It turns out that if we take the {\em absolute value, } which we denote $|\square|,$ in the sense that we replace $F$ by $I$ in (\ref{main}) then we get  

\begin{equation} \label{absolute}
\forall \, A, B \in \, M_n(\ca): \quad A |\square| B = V^*\l(A B) V = \mathrm{ diag}(AB) := \underset{1 \leq i,j \leq n}{\sum} e_{ii} \otimes a_{ij}b_{ji}.  \end{equation}

\noindent
From the equations (\ref{difference}) and ( \ref{absolute}) we get immediately the following operator inequality

\begin{equation} \label{SchurIneq}
\forall \, A \in \, M_n(\ca):\quad -\mathrm{diag} (A^* A)\leq A^* \square A \leq  \mathrm{diag}(A^*A),
\end{equation} 

\noindent
and an inequality which is closely related to the classical Cauchy-Schwarz inequality for positive semidefinite bilinear forms,

\begin{align} \label{SchurCaSc}
\notag \forall \Xi, \Gamma \in \bc^n\otimes  H    & \,\forall A, B  \in \, M_n(B(H)):\\  |\langle (A
\square B) \Xi, \Gamma \rangle|^2 \,  & \leq  \, \langle \mathrm{diag}(B^*B)\Xi, \Xi\rangle \langle \mathrm{diag}(AA^*)\Gamma, \Gamma \rangle \\ & = \notag \, \|\big(\mathrm{diag}(B^*B)\big)^{1/2}\Xi\|^2\|\big(\mathrm{diag}(AA^*)\big)^{1/2}\Gamma\|^2.
\end{align}
 
After we completed the first draft of a presentation of this result we realized that the  operation of constructing $\square^k$ was introduced in \cite{HMN} by Horn, Mathias and Nakamura in the case when the Schur product is the classical one on $M_n(\bc),$ or in other words when the C$^*$-algebra equals $\bc.$ The  article  \cite{HMN} is from 1991, and the quoted result on completely bounded bilinear operators is presented in \cite{CS} from 1987. On the other hand  the result of Lemma 3.2 of  the article \cite{HMN} actually is related to our description of the Schur product $\square$  given in Theorem \ref{mainT} as a completely bounded operator. 

The proof we present below  goes back to the classical result for scalar matrices, which tells that the Schur or Hadamard product may be found as a principal submatrix of the Kronecker product, or in modern terms the tensor product, of the two matrices. This result is presented in \cite{HJ}, Lemma 5.1.1. The new twist is that for block matrices the Schur product is no longer commutative.

The reason why we tried to show complete boundedness of the Schur product came from an inequality in \cite{Ch}, where we studied commutators of the form $[f(D),a]$ where $D$ is an unbounded self-adjoint operator and $f$ is an absolutely continuous function with a certain growth condition. 
 The basic tool we used in that study is a  result on the operator norm of the  Schur product between a pair  of operator valued matrices. 
 This result follows easily from the  description of the Schur product we give below, and is presented as a part of our main theorem.  
In the article \cite{Ch} we got the result as a generalization of  Theorem 1.1 point (i) of  \cite{Be}, in which Bennett studies the  scalar Schur  product. Later on, when working on the present article, we realized that Livshits  already in  \cite{Li} from 1994 published the same inequality. Furthermore the inequality may be seen as an extension of results by Horn, Mathias and Nakamura \cite{ HM1, HMN} on  analogies  to the Cauchy-Schwarz inequality.  
 In order to formulate this result we have to introduce the concepts {\em row norm} and {\em column norm } of a matrix of operators. 
The column norm of a matrix with operator entries is simply the supremum of the norms of the columns from the matrix, when considered as operators. The  row norm is defined in the obvious analogous way, and it equals the column norm of the adjoint operator.  The fundamental norm identity for bounded operators on Hilbert spaces states that $\|x^*x\| = \|x\|^2,$ and based on this we can give the following formal definition. 

\begin{definition} \label{norms}
Let $\ca$ be  subalgebra  of a  C$^*$-algebra $\cc$, 
$J$ a set of indices and $A = (a_{ij}), \, \, i, j \, \in \, J, \, \, a_{ij}  \in \ca$ an $\ca$ valued matrix over $J.$  The column norm $\|A\|_c $ and the row norm $\|A\|_{\mathrm{r}} $ are given by the expressions 
\begin{align*} \label{rcnorms} 
\|A\|_{\mathrm{r}} \,  :=& \underset{i \in J}{\sup} \sqrt{\|\underset{j \in J}{\sum } a_{ij} a_{ij}^*\|} \, = \, \sqrt{\|\mathrm{diag}(AA^*)\|}\\
\|A\|_c \,  :=& \underset{j \in J}{\sup} \sqrt{ \|\underset{i \in J}{\sum} a_{ij}^* a_{ij}\| }\, = \, \sqrt{\|\mathrm{diag}(A^*A)\|}.
\end{align*} 
\end{definition} 

\noindent
Livshits' inequality may then be presented as follows: \newline
For any pair of matrices $A =(a_{ij}), B=(b_{ij})$ indexed over $J$ and  with entries from an operator algebra $\ca,$  the operator norm $\|A \square B \|$ of the Schur block product $A \square B =(a_{ij}b_{ij})$ satisfies the inequality
\begin{equation} \label{norm}
 \|A \square B \| \leq \|A\|_{\mathrm{r}}\|B\|_{\mathrm{c}}.
\end{equation}

\section{The explicit Stinespring form of the Schur block product}
 
We  will now present our decomposition of the Schur block product  for matrices over an operator algebra $\ca,$ and we may and will just as well assume that $ \ca$ is a subalgebra of $B(H)$ for some Hilbert space $H$. The set of indices $J,$ with respect to which we will construct square matrices over $\ca$ may be any set and hence it may be infinite. We will use the symbol $M_J(\ca)$ to denote all square matrices over $  \ca,$  indexed by $J,$ and defining bounded operators on $\ell^2(J, H).$ The point of having $M_J(\ca)$ in mind instead of the larger algebra $M_J((B(H))$ is to underline that the Schur block product is an associative product on the algebra $M_J(\ca).$ On the other hand the description of the product we are going to give will be independent of the algebra $\ca,$  so in the rest of the article  we will just consider the Schur block product as a binary operation on matrices over $B(H).$ The result that the Schur product of bounded matrices is bounded, follows directly  from the description of the product we give.

We will first define the notation we are using.  There is a canonical orthonormal  basis say  $\{\a_j \, : \, j \in J\}$ for $\ell^2(J, \bc)$ and corresponding to this basis there is a set of matrix units  $e_{ij} $ in $B(\ell^2(J, \bc))$ such that  we have $e_{ij}\a_k = \d_{jk}\a_i.$   We will then adopt the notation that for a matrix $A = (a_{ij}) $ in $M_J(B(H))$ we will  represent it as an operator  on the Hilbert space $\ell^2(J, \bc) \otimes H \otimes \ell^2(J, \bc) $ in 3 different ways  as limits of strongly convergent bounded nets, such that each element in each of the nets is a finite  sum of elementary tensor products of operators.
The ordered index set for all 3 nets will be denoted $(\cf(J), \subseteq),$ and it consists of all finite subsets of $J$ ordered by inclusion. 
\begin{align}  
\l(A) &:= \underset{F \in \cf(J)}{ \mathrm{strong \,  limit}} \underset{i,j, k \in F}{\sum} e_{ij}\otimes a_{ij}\otimes e_{kk}, \label{lambda}
\\
\s(A) &:= \underset{F \in \cf(J)}{ \mathrm{strong \,  limit}} \underset{i,j \in F}{\sum} e_{ij}\otimes a_{ij}\otimes e_{ij}, \label{defsigma}
\\
\r(A) &:= \underset{F \in \cf(J)}{ \mathrm{strong \, limit}} \underset{i, k,l\in F}{\sum} e_{ii}\otimes a_{kl}\otimes e_{kl}. \label{rho}
\end{align} 
We assume that it is well known that the mappings $\l, \r,\s$ are faithful *-repre-sentations, i.e. injective self-adjoint  homomorphisms of norm 1. In particular this means that for 2 bounded matrices $A = (a_{ij}) $ and $B = (b_{ij})$ and for a fixed pair $(i,m)$ of indices the infinite sum $\sum_{j \in
J} a_{ij}b_{jm} $ is strongly convergent and defines a matrix element $c_{im}$ of a matrix $C$ in $M_J(B(H)),$ such that $\l(C)$ is given as the strong limit  
\begin{align}
 \l(C) = &\underset{F \in \cf(J)}{ \mathrm{strong \,  limit}} \underset{i,j,k,l,m,n \in F}{\sum} (e_{ij}\otimes a_{ij} \otimes e_{kk})(e_{lm}\otimes b_{lm}\otimes e_{nn}) \\
=& \,\, \, \, \underset{F \in \cf(J)}{ \mathrm{strong \,  limit}} \underset{i,j,k,m \in F}{\sum} e_{im}\otimes a_{ij}b_{jm}  \otimes e_{kk}.  \end{align}

The representations $\l$ and $\r$ are unital, but $ \s$ is not, unless $J$ consists of one element. For $\s$ we get $\s(I) = \sum_{j\in J} e_{jj}\otimes I_{B(H)} \otimes e_{jj}$ which is an orthogonal projection, say $Q,$ from $\ell^2(J, \bc) \otimes H \otimes \ell^2(J, \bc))$ onto the closed subspace $K$ which is spanned by all the vectors of the form $\{\a_j\otimes \xi\otimes \a_j\, : \, j \in J , \, \xi \in H\}.$
The reason why we have attached the names $\l, \r $ and $\s$ to these representations, is that in the case when $ H = \bc$ and $J=\{1, , \dots , n\},$ then $\l$ and $\r$ are named the   left and the right standard representation of $M_n(\bc)$ in the theory of von Neumann algebras and these 2 representations have something in common with the left and the right regular representation of a discrete group. The representation $\s$ is a kind of symmetric mix  and it fits nicely into the description of the Schur block product. Before we can see that, we need a generalization of the Kronecker product to the setting of matrices with operator entries. 

\begin{definition} Let $A =(a_{ij}) $ and $B=(b_{kl})$ be elements in $M_J((B(H)).$ The Kronecker block product of $A$ and $B$ is the  matrix $A*_{KB}B $ in $M_J(M_J(B(H))),$ which is defined by the equation 
 \begin{equation} \label{KB}
A*_{KB}B \, := \, \l(A)\r(B) \, = \, \underset{F \in \cf(J)}{ \mathrm{strong \,  limit}} \underset{i,j,k,l \in F}{\sum} e_{ij}\otimes a_{ij}b_{kl} \otimes e_{kl}.
\end{equation}
\end{definition}
 We may now benefit  from the classical result \cite{HJ} Lemma 5.1.1, which describes the Schur product of two scalar matrices  as a principal submatrix of their Kronecker product. In the setting of (\ref{KB}), the matrix, we are looking at, is of the form  $(J \times J)\times (J \times J)$ and the rows are indexed by pairs $(i,k)$ whereas the columns are indexed by pairs $(j, l)$ and the principal submatrix, which gives the Schur block product, is the one where the index set consists of all the pairs $\{ (j,j)\, : \, j \in J\}.$  Moreover we find right away that the orthogonal projection $Q$ we defined above is exactly the one which supports the principal sub-matrices  which have non-zero entries only on elements which have indices of the form $((i,i), (j,j)).$   Based on this we state without any further proof the following proposition:
 
\begin{proposition}
Let $A =(a_{ij})$ and  $B = (b_{ij} )$ be elements in $M_J((B(H)), $ then their Schur block product $A\square B = (a_{ij}b_{ij}) $ is in $M_J((B(H))$ and \begin{equation} \label{sASB} 
\s(A\square B)  \, = \,\underset{F \in \cf(J)}{ \mathrm{strong \,  limit}} \underset{ij \in F}{\sum} e_{ij} \otimes a_{ij}b_{ij} \otimes e_{ij} \, = \, Q \l(A)\r(B) Q   \, = \,Q(A*_{KB}B)Q .
\end{equation}
 \end{proposition}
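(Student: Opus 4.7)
The plan is to unwind the three equalities in (\ref{sASB}) in order, treating each as a direct consequence of the definitions (\ref{defsigma}), (\ref{lambda}), (\ref{rho}), (\ref{KB}) together with one short computation involving the projection $Q$. The first equality is just the formula (\ref{defsigma}) applied to the matrix with entries $a_{ij}b_{ij}$, and the third equality is the definition (\ref{KB}) of the Kronecker block product. So the real work is in the middle equality $Q\l(A)\r(B)Q = \s(A\square B)$, and this also delivers boundedness of $A\square B$ as a by-product.

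For the middle equality I would start from the explicit formula given by (\ref{KB}),
\[
\l(A)\r(B) = \underset{F \in \cf(J)}{\mathrm{strong\ limit}} \sum_{i,j,k,l\in F} e_{ij}\otimes a_{ij}b_{kl}\otimes e_{kl},
\]
and multiply on both sides by $Q = \sum_m e_{mm}\otimes I_H \otimes e_{mm}$ (a strongly convergent sum of mutually orthogonal projections, hence itself an orthogonal projection). Using the matrix-unit identity $e_{mm}e_{ij}e_{nn} = \d_{mi}\d_{jn}e_{ij}$ in both the first and third tensor slots, only the terms with $i=k$ and $j=l$ survive the compression, yielding
\[
Q\l(A)\r(B)Q = \underset{F \in \cf(J)}{\mathrm{strong\ limit}} \sum_{i,j\in F} e_{ij}\otimes a_{ij}b_{ij}\otimes e_{ij}.
\]
Since $Q$ is bounded and the net defining $\l(A)\r(B)$ is bounded and strongly convergent, the compressed net is bounded and strongly convergent to the above operator; this handles the convergence subtlety that would otherwise arise when $J$ is infinite.

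To conclude that $A\square B$ actually lies in $M_J(B(H))$ (and not merely in $M_J$ of some larger algebra with unbounded off-diagonal entries), I would note that the right-hand side above is precisely the strong limit defining $\s(C)$ where $C$ is the formal matrix $(a_{ij}b_{ij})$. Since $\s$ is an isometric $*$-representation, the existence of this strong limit as a bounded operator forces $C$ to define a bounded element of $M_J(B(H))$, with $\|A\square B\| = \|Q\l(A)\r(B)Q\| \leq \|A\|\|B\|$. Thus $A\square B = C$, and the equality $\s(A\square B) = Q\l(A)\r(B)Q = Q(A*_{KB}B)Q$ follows.

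The main technical obstacle is purely bookkeeping with the strong-operator topology on the net indexed by $\cf(J)$: one must check that each truncation to a finite $F$ can be compressed by finite sub-sums of $Q$ and that the interchange of these two nets (one for $\l(A)\r(B)$, one for $Q$) is justified by boundedness. No new inequality is required beyond the elementary $\|a_{ij}b_{ij}\|\leq\|A\|\|B\|$, so the proof is essentially a careful matrix-unit calculation.
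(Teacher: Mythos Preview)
Your proposal is correct and follows the same underlying idea as the paper: identify $A\square B$ as the principal submatrix of the Kronecker block product supported on the diagonal indices $\{(j,j):j\in J\}$, which is precisely the compression by $Q$. The paper in fact states this proposition \emph{without any further proof}, simply invoking the classical scalar result (Lemma~5.1.1 of \cite{HJ}) and observing that $Q$ is the projection onto the span of the vectors $\a_j\otimes\xi\otimes\a_j$. Your matrix-unit computation $e_{mm}e_{ij}e_{nn}=\d_{mi}\d_{jn}e_{ij}$ in both outer slots, forcing $i=k$ and $j=l$, is exactly the explicit verification of what the paper takes as evident, and your remarks on strong convergence and on deducing boundedness of $A\square B$ from the isometric nature of $\s$ supply details the paper omits.
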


It should be remarked, that you may  right away see,  that in the case when $H=\bc,$ then the matrices $A=(a_{ij}) $ and $B=(b_{kl})$ are scalar matrices, and the Kronecker block product is just the well known Kronecker product.  

  The space $K := Q \big(\ell^2(J,\bc) \otimes H \otimes \ell^2(J, \bc)\big)$ is closely related to $\ell^2(J, \bc) \otimes H$ and we  define an isometry $V$ of $\ell^2(J, \bc) \otimes H$ onto  $K$ by 
\begin{equation} \label{V}
 \forall \,  \Xi \in \ell^2(J,\bc) \otimes H,\,\,  \Xi = \sum_{j \in J} \a_j \otimes \xi_j  : \quad V\Xi \, := \, \sum_{j\in J} \a_j\otimes \xi_j\otimes \a_j.
\end{equation} 

\noindent
It is now a matter of computation to verify the following equation, which shows that the representation $\s$ on $K$ is unitarily equivalent to the identity representation of $M_J(B(H))$ on $\ell^2(J, \bc)\otimes H.$

\begin{equation}\label{sigma} 
\forall \, A = (a_{ij}) \in M_J(B(H)): \quad \s(A)V= V A, \text{ or } A = V^* \s(A) V. 
\end{equation}

We may then present our first theorem.

\begin{theorem} \label{thmcb}
The Schur block product is completely bounded, with completely bounded norm 1.
\end{theorem}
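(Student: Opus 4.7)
The plan is to combine the proposition (which expresses $\sigma(A\square B)$ as the compression $Q\lambda(A)\rho(B)Q$ of the Kronecker block product to the subspace $K = Q\bigl(\ell^2(J,\bc)\otimes H\otimes\ell^2(J,\bc)\bigr)$) with the intertwining identity in (\ref{sigma}) that exhibits $\sigma$ as unitarily equivalent, via $V$, to the identity representation of $M_J(B(H))$ on $\ell^2(J,\bc)\otimes H$. Once this is done, the Stinespring characterization of completely bounded bilinear maps from \cite{CS}, namely (\ref{Stine}), delivers the bound instantly.

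Concretely, I would start from
$$A\square B \;=\; V^*\,\sigma(A\square B)\,V \;=\; V^*Q\,\lambda(A)\rho(B)\,QV,$$
using the proposition at the second step. Since $V$ maps $\ell^2(J,\bc)\otimes H$ isometrically onto $K=\mathrm{range}(Q)$ by (\ref{V}), one has $QV=V$ and $V^*Q=V^*$. The display then collapses to
$$A\square B \;=\; V^*\lambda(A)\rho(B)V.$$
This is already in the Stinespring form (\ref{Stine}) with the choices $X=V^*$, $Y=I$ (the identity on the common representation space of $\lambda$ and $\rho$), $Z=V$, and the faithful $*$-representations $\lambda,\rho$ of $M_J(B(H))$ defined in (\ref{lambda}) and (\ref{rho}).

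Invoking the norm equality of (\ref{Stine}) gives
$$\|\square\|_{cb} \;=\; \|V^*\|\,\|I\|\,\|V\| \;=\; 1,$$
because $V$ is an isometry. The matching lower bound is automatic: taking $A=B=I$ yields $A\square B=I$, whose norm is $1$, so the completely bounded norm cannot be strictly less than $1$.

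I do not foresee any serious obstacle at this stage; the hard work has already been packaged into the proposition (the Horn--Johnson-style identification of the Schur product as a principal compression of a Kronecker-type product) and into the verification that $V$ intertwines $\sigma$ with the identity representation. What remains is the one-line algebraic manipulation above, plus the single appeal to the general Stinespring theorem of \cite{CS}. If anything needs care, it is only checking that the strong limits defining $\lambda(A)\rho(B)$ are indeed compatible with compression by $Q$ and by $V^*\!\cdot V$, but this is immediate from the fact that $\lambda$ and $\rho$ are norm-$1$ $*$-homomorphisms, so no convergence issue arises.
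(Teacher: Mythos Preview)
Your proof is correct and follows exactly the paper's approach: the paper's own argument is the same three-line chain $A\square B = V^*\sigma(A\square B)V = V^*(A*_{KB}B)V = V^*\lambda(A)\rho(B)V$ (using $VV^*=Q$), followed by the remark that the algebra is unital to get the lower bound. One small phrasing quibble: the equality in (\ref{Stine}) refers to an \emph{optimal} Stinespring decomposition, so from any particular one you only deduce $\|\square\|_{cb}\le \|V^*\|\,\|I\|\,\|V\|=1$; but since you immediately supply the matching lower bound via $A=B=I$, the argument is complete.
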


\begin{proof}
We give a description of the Schur block product in the form described in equation (\ref{Stine}), so let $A= (a_{ij})$ and $B=(b_{ij}) $ be in $M_J(B(H))$ then 
\begin{align} 
\notag A \square B \, & = \, V^*\s(A \square B)V \text{ by } (\ref{sigma}) \\
\notag &=\, V^* (A*_{KB} B)V \text{ by } (\ref{sASB}) \text{ and } VV^*=Q\\
&= V^* \l(A)\rho(B) V \text{ by } (\ref{KB}) \label{bScb},
\end{align} 
and we have obtained the form from (\ref{Stine}) with operators of norm 1.
\newline Since $M_J(B(H))$ has a unit, the completely bounded norm is 1, and the theorem follows.
   \end{proof}
We could leave the result like this, but we think that the bilinear operators, we look at, do  have many things in common with sesquilinear forms, and in the latter case we do prefer self-adjoint or even better positive semidefinite forms. A similar kind of aesthetics may apply here, so we want to describe the Schur block product not only as a bilinear completely bounded operator, but rather as a difference of 2 positive  completely bounded bilinear operators. We are not far from this in the equation (\ref{bScb}), but  we need to introduce a well known self-adjoint unitary to get the expression, we think may be the {\em right } one.

\begin{definition} \label{flip}
The flip operator $F$ on $\ell^2(J,\bc) \otimes H\otimes \ell^2(J,\bc) $ is defined as the strong limit   $$ F\,:=\, \underset{K \in \cf(J)}{ \mathrm{strong \,  limit}} \underset{ i,j \in F}{\sum} e_{ij} \otimes I \otimes e_{ji} .$$
\end{definition}

\noindent
We have a couple of simple observations, which we collect in the following lemma. 

\begin{lemma} \label{LemmaF}

\begin{itemize}
\item[]
\item[(i)] $F$ is a self-adjoint unitary 
\item[(ii)] $\forall A = (a_{ij}) \in M_J(B(H)): \quad F\l(A)F = \rho(A)$ 
\item[(iii)] $FV \, = \, V .$
\end{itemize}
\end{lemma}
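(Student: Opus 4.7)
The plan is to recognize that $F$ is essentially the \emph{flip} (swap) operator that interchanges the two copies of $\ell^2(J,\bc)$ while leaving $H$ alone, and then read off all three statements from that single observation.

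For (i), I would compute directly from the definition. Taking adjoints termwise in the strong limit, $F^* = \sum_{i,j} e_{ij}^* \otimes I \otimes e_{ji}^*= \sum_{i,j} e_{ji} \otimes I \otimes e_{ij}$, and after interchanging the dummy indices $i \leftrightarrow j$ this is exactly $F$. For unitarity, multiply the defining sums and use $e_{ij}e_{kl} = \delta_{jk}e_{il}$ in both outer factors:
\[
F^2 = \underset{i,j,k,l}{\sum} e_{ij}e_{kl}\otimes I \otimes e_{ji}e_{lk} = \underset{i,j,k,l}{\sum} \delta_{jk}\delta_{il}\, e_{il} \otimes I \otimes e_{jk} = \underset{i,j}{\sum} e_{ii} \otimes I \otimes e_{jj} = I,
\]
so $F=F^*=F^{-1}$.

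For (ii), I would first verify on basis vectors that $F$ swaps the outer factors: for any $j,l \in J$ and $\xi \in H$,
\[
F(\a_j \otimes \xi \otimes \a_l) = \underset{a,b}{\sum} e_{ab}\a_j \otimes \xi \otimes e_{ba}\a_l = \underset{a,b}{\sum} \d_{bj}\d_{al}\, \a_a \otimes \xi \otimes \a_b = \a_l \otimes \xi \otimes \a_j.
\]
Hence conjugation by $F$ exchanges an operator of the form $x \otimes y \otimes z$ with $z \otimes y \otimes x$. Applied termwise to the defining sum for $\l(A)$ this gives
\[
F\l(A)F = \underset{i,j,k}{\sum} e_{kk} \otimes a_{ij} \otimes e_{ij},
\]
and after renaming $(k,i,j) \to (i,k,l)$ this is precisely $\r(A)$ as defined in (\ref{rho}).

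For (iii), note that $V\Xi = \sum_{j\in J} \a_j \otimes \xi_j \otimes \a_j$ has identical entries in the first and third tensor slots, so by the swap description of $F$ already established, $FV\Xi = V\Xi$.

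I do not foresee any serious obstacle: the only care required is keeping track of the dummy indices in the three strong limits and confirming that termwise manipulations are justified because $\l, \r, \s$ and $F$ are defined as \emph{bounded} strong limits over the directed set $\cf(J)$, so the computations pass to the limit without trouble.
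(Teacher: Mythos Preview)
Your proof is correct and follows essentially the same approach as the paper: both recognize $F$ as the flip operator swapping the two $\ell^2(J,\bc)$ factors, from which (i)--(iii) follow immediately. The paper simply declares (i) and the swap identity $F(X\otimes Y\otimes Z)F = Z\otimes Y\otimes X$ to be well known, whereas you supply the explicit matrix-unit computations; the argument for (iii) via eigenvectors of $F$ with eigenvalue $1$ is the same in both.
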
 
 \begin{proof} 
It is well known that $F$ is a self-adjoint unitary, which has the property that for $X,Z $ in $B(\ell^2(J, \bc))$ and $Y $ in $B(H)$ we have $F(X \otimes Y \otimes Z) F = Z\otimes Y \otimes X,$ so the statements (i) and (ii) follow. The statement (iii) follows easily once we remark that the subspace $K = Q (\ell^2(J,\bc) \otimes H \otimes  \ell^2(J,\bc)),$ which is the range space of $V,$  is spanned by vectors of the form $\a_i \otimes \xi \otimes \a_i$ and all such vectors are clearly eigenvectors for $F$ corresponding to the eigenvalue 1. 
\end{proof}

Below we list a property of the  isometry $V,$ which  will show why the operator norm of a Schur product $A \square B$ is related to the row norm of $A$ and the column norm of $B$ as described in (\ref{norm}). 

\begin{lemma} \label{columnnorm}
For any matrix $A=(a_{ij}) $ in $M_J(B(H)) $ we have
 \begin{align*}
\|A\|_c &=   \|\l(A)V\|\\
\|A\|_{\mathrm{r}} & = \|V^*\l(A)\|.
\end{align*} 
\end{lemma}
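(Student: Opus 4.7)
The plan is to use the fundamental $C^*$-identity $\|T\|^2 = \|T^*T\|$ together with the fact that $\lambda$ is a $*$-representation, reducing both equalities to evaluating $V^*\lambda(C)V$ for a matrix $C \in M_J(B(H))$.

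The preparatory step is to verify the auxiliary identity
\begin{equation*}
V^*\lambda(C)V \, = \, \mathrm{diag}(C) \, = \, \sum_{i \in J} e_{ii} \otimes c_{ii},
\end{equation*}
which is essentially already recorded in (\ref{absolute}) with the choice $A = I$, $B = C$, but can also be checked directly: for $\Xi = \sum_j \alpha_j \otimes \xi_j$ in $\ell^2(J,\bc)\otimes H$, the definition (\ref{lambda}) combined with (\ref{V}) gives
\begin{equation*}
\lambda(C)V\Xi \, = \, \sum_{i,j,k} (e_{ij}\otimes c_{ij} \otimes e_{kk})(\alpha_j \otimes \xi_j \otimes \alpha_j) \, = \, \sum_{i,j} \alpha_i \otimes c_{ij}\xi_j \otimes \alpha_j,
\end{equation*}
and applying $V^*$, which returns $\delta_{ij}\,\alpha_i \otimes \eta$ on the simple tensor $\alpha_i\otimes \eta \otimes \alpha_j$, collapses this to $\sum_i \alpha_i \otimes c_{ii}\xi_i$, as claimed.

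With this identity in hand, the first equality follows from
\begin{equation*}
\|\lambda(A)V\|^2 \, = \, \|V^*\lambda(A)^*\lambda(A)V\| \, = \, \|V^*\lambda(A^*A)V\| \, = \, \|\mathrm{diag}(A^*A)\|,
\end{equation*}
whose square root is precisely $\|A\|_c$ by Definition \ref{norms}. The second equality is proved in exactly the same way, starting from
\begin{equation*}
\|V^*\lambda(A)\|^2 \, = \, \|V^*\lambda(A)\lambda(A)^*V\| \, = \, \|V^*\lambda(AA^*)V\| \, = \, \|\mathrm{diag}(AA^*)\|,
\end{equation*}
which yields $\|V^*\lambda(A)\| = \|A\|_{\mathrm r}$.

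There is no real obstacle here; the only place a computation is needed is the auxiliary identity $V^*\lambda(C)V = \mathrm{diag}(C)$, and this has essentially been recorded already. Everything else is formal manipulation using the $C^*$-identity and the fact that $\lambda$ is a $*$-homomorphism.
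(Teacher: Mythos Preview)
Your proof is correct. The approach is close in spirit to the paper's---both use the $C^*$-identity to reduce to $V^*\lambda(A^*A)V$---but you organize it more cleanly by first isolating the auxiliary identity $V^*\lambda(C)V = \mathrm{diag}(C)$ and then applying it with $C = A^*A$ (resp.\ $AA^*$) to get exact equality in one stroke. The paper instead works with $Q = VV^*$, expands $Q\lambda(A^*A)Q$ directly in matrix units, and establishes the two inequalities $\leq$ and $\geq$ separately; the diagonal identity you use is only recorded afterwards, in Lemma~\ref{diaglem}. Your route is shorter and more conceptual, at the minor cost of anticipating a fact the paper proves later (though you do verify it independently, so there is no circularity). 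One small remark: your appeal to equation~(\ref{absolute}) is to a statement announced in the introduction rather than proven at that point, so it is your direct vector computation that actually carries the weight.
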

\begin{proof}
We will prove the column case only, since the row case follows by taking adjoints. First remark, that $Q = VV^*,$ so the identity $\|\l(A)Q\| =  \|\l(A)V\|$ follows because $V$ is an isometry. Then let  us compute the square of the norm $ \l(A)Q$ of using the C$^*$-algebraic norm identity.

\begin{align*}   
&\|\l(A)Q\|^2 = \|Q\l(A^*A) Q\| \\ 
&=  \|\underset{F \in \cf(J)}{ \mathrm{strong \,  limit}} (\underset{ i,j,k,l,s,t \in F}{\sum} (e_{ii} \otimes I \otimes e_{ii})(e_{jl}\otimes a_{kj}^*a_{kl} \otimes e_{ss})(e_{tt}\otimes I \otimes e_{tt})\| \\
& \text{ we see that }  i=j=s = t =l \text{ so } \\
& \leq \underset{F \in \cf(J)}{ \limsup}\| 
\underset{  i, k \in F}{\sum} e_{ii} \otimes a^*_{ki}a_{ki} \otimes e_{ii} \| \\
&= \underset{ i \in J}{\sup} \| \underset{k\in J}{\sum}  a^*_{ki}a_{ki} \| = \|A\|_c^2.
  \end{align*} On the other hand, for each $j \in J$ we have $$ \|\l(A)Q \| \geq \|\l(A) (e_{jj} \otimes I_{B(H)} \otimes e_{jj} ) \| = \sqrt{ \|\sum_{i \in J} a_{ij}^*a_{ij}\|},$$ so $\|\l(A)Q\| \geq \|A\|_c$  and the lemma follows. 
\end{proof}
We give the formal definition of the diagonal of a matrix of operators
\begin{definition} \label{diag} 
For an operator $A = (a_{ij} )  $ in $M_J(B(H))$  we will define the diagonal $\mathrm{diag}(A)$ in $M_J(B(H))$  by 
$$ \mathrm{diag}(A)_{ij} = \begin{cases} 0 \quad \text{ if } i \neq j\\
a_{ii} \quad \text{ if } i = j\end{cases}.$$
\end{definition}
There are some simple observations we will use.
\begin{lemma} \label{diaglem}
For an operator $A$ in $M_J(B(H))$ we have \begin{align*}
\mathrm{diag}(A) &= V^*\l(A)V \\
\|A\|_c  & = \|\mathrm{diag}(A^*A)\|^{1/2} \\
\|A\|_r  & = \|\mathrm{diag}(AA^*)\|^{1/2}.
\end{align*}
\end{lemma}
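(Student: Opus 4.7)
The plan is to prove the three identities in order, using only the defining formulas for $V$, $\l$, and $\mathrm{diag}$, together with Lemma \ref{columnnorm}.

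For the identity $\mathrm{diag}(A) = V^*\l(A)V$, I would compute directly on a vector $\Xi = \sum_{j\in J} \a_j \otimes \xi_j$. Using the formula (\ref{V}) for $V$, we get $V\Xi = \sum_j \a_j \otimes \xi_j \otimes \a_j$, and then applying $\l(A)$ as in (\ref{lambda}) pushes this to $\sum_{i,j} \a_i \otimes a_{ij}\xi_j \otimes \a_j$. Since the range of $V$ is the space $K$ spanned by the vectors of the form $\a_\ell \otimes \eta \otimes \a_\ell$, and $V^*$ acts as the inverse of $V$ on $K$ and vanishes on $K^\perp$, only the terms with $i=j$ survive, giving $V^*\l(A)V\Xi = \sum_i \a_i \otimes a_{ii}\xi_i$, which is exactly $\mathrm{diag}(A)\Xi$.

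For the column-norm identity, I would combine the first identity with Lemma \ref{columnnorm}. Specifically, using that $\l$ is a $*$-representation and the C$^*$-identity,
\begin{equation*}
\|A\|_c^2 = \|\l(A)V\|^2 = \|V^*\l(A)^*\l(A)V\| = \|V^*\l(A^*A)V\| = \|\mathrm{diag}(A^*A)\|.
\end{equation*}
The row-norm identity is obtained by the same argument applied to $A^*$, or equivalently by taking adjoints inside the chain:
\begin{equation*}
\|A\|_{\mathrm{r}}^2 = \|V^*\l(A)\|^2 = \|V^*\l(A)\l(A)^*V\| = \|V^*\l(AA^*)V\| = \|\mathrm{diag}(AA^*)\|.
\end{equation*}

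The only substantive step is the first identity $V^*\l(A)V = \mathrm{diag}(A)$; everything else reduces to combining it with previously proved facts. The main (very mild) obstacle is verifying that $V^*$ annihilates the off-diagonal terms $\a_i\otimes a_{ij}\xi_j\otimes \a_j$ for $i\ne j$, which follows from the description $\mathrm{range}(V)=K$ together with $VV^*=Q$, the projection onto $K$. Once this is observed, the proof is a short bookkeeping exercise on the strong limits defining $\l$ and $V$.
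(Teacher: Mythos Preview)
Your proof is correct. The paper itself does not give a proof of this lemma; it introduces it with ``There are some simple observations we will use'' and leaves the verification to the reader, so your write-up actually supplies what the paper omits.

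One minor remark on parts two and three: the identities $\|A\|_c = \|\mathrm{diag}(A^*A)\|^{1/2}$ and $\|A\|_r = \|\mathrm{diag}(AA^*)\|^{1/2}$ are already recorded in Definition~\ref{norms}, where they follow at once from the fact that the norm of a block-diagonal operator equals the supremum of the norms of its diagonal blocks. Your route through Lemma~\ref{columnnorm} and the first identity is perfectly valid and pleasantly self-contained, but it is a slight detour compared to simply invoking the definition. The first identity $\mathrm{diag}(A)=V^*\l(A)V$ is the only genuinely new content, and your direct computation on vectors is exactly the right way to see it.
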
 

Then we can state the main result.

\begin{theorem} \label{mainT}
For any Hilbert space $H$  and any set of indices $J:$  \begin{itemize}
\item[(i)] The Schur block product on $M_J(B(H))$ is given by the formula. 
$$ \forall A,\,  B \, \in M_J(B(H)): \quad A \square B \, = \, V^* \l(A) F \l(B) V.$$
\item[(ii)] This is Livshits's inequality, \cite{Li}
$$ \forall A,\,  B \, \in M_J(B(H)): \quad \| A \square B \| \, \leq  \, \|A\|_r\|B\|_c.$$ 
\item[(iii)] Let $X = (x_{ij}), $ and $Y=(y_{ij})  $ be  matrices indexed by  $J$ with elements  from $B(H),  $ and $C$ a non negative real.  If the matrix $Z$ defined by  $ Z = (z_{ij}) := (x_{ij} y_{ij} ) $ is bounded whenever $Y$ is column bounded and satisfies $\|Z\|_{op} \leq C\|Y\|_c, $ then $X$ is row bounded and satisfies $\|X\|_r \leq C.$ 

\item[(iv)]
$$ \forall A \, \in M_J(B(H)):  -\mathrm{diag}(A^*A) \leq A^* \square A \leq \mathrm{diag}(A^*A).$$
\item[(v)]
\begin{align*}&\forall \Xi, \Gamma \in \ell^2(J,\bc)\otimes H   \,\forall A, B  \in \, M_J(B(H)):\\ 
& |\langle (A
\square B) \Xi, \Gamma \rangle| \leq   \|\big(\mathrm{diag}(B^*B)\big)^{1/2}\Xi\| \|  \big(\mathrm{diag}(AA^*)\big)^{1/2}\Gamma\|.
\end{align*}
 \end{itemize}
 \end{theorem}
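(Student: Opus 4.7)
The plan is to derive all five statements from a single master identity, namely part (i), by plugging into the earlier machinery. The two tools that do the real work are Lemma \ref{LemmaF} (which says $\r(B)=F\l(B)F$ and $FV=V$) together with Lemma \ref{diaglem} (which identifies $V^*\l(\cdot)V$ with the diagonal operator). Everything else is an application of Cauchy--Schwarz, submultiplicativity, or spectral bounds.

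For (i), I would start from the form $A\square B = V^*\l(A)\r(B)V$ already established in equation (\ref{bScb}), insert $\r(B)=F\l(B)F$ from Lemma \ref{LemmaF}(ii), and then absorb the trailing $F$ using $FV=V$ from Lemma \ref{LemmaF}(iii). Statement (ii) is then immediate from the factorization: since $F$ is a unitary, applying the norm gives $\|A\square B\|\le \|V^*\l(A)\|\,\|\l(B)V\|$, which by Lemma \ref{columnnorm} is exactly $\|A\|_r\|B\|_c$. For (iv), observe that the flip $F$ satisfies $-I\le F\le I$ because it is a self-adjoint unitary, so sandwiching by $T:=\l(A)V$ yields $-T^*T\le T^*FT\le T^*T$; both outer operators equal $V^*\l(A^*A)V=\mathrm{diag}(A^*A)$ by Lemma \ref{diaglem}, and the middle one is $A^*\square A$ by (i). For (v), I would rewrite
\[
\langle (A\square B)\Xi,\Gamma\rangle \;=\; \langle F\l(B)V\Xi,\l(A^*)V\Gamma\rangle,
\]
apply the standard Cauchy--Schwarz inequality in the ambient Hilbert space, and use that $F$ is unitary. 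The two resulting factors $\|\l(B)V\Xi\|^2 = \langle V^*\l(B^*B)V\Xi,\Xi\rangle$ and $\|\l(A^*)V\Gamma\|^2 = \langle V^*\l(AA^*)V\Gamma,\Gamma\rangle$ collapse to $\langle\mathrm{diag}(B^*B)\Xi,\Xi\rangle$ and $\langle\mathrm{diag}(AA^*)\Gamma,\Gamma\rangle$ by Lemma \ref{diaglem}, which is the desired inequality.

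The one statement that is not a direct rewriting is (iii), and it will be the main obstacle because we have to build an explicit witness $Y$ that exposes the row norm of $X$. For a fixed $i_0\in J$ and a finite subset $F\subseteq J$, I would define $Y^{(F)}=(y_{kj})$ by $y_{i_0,j}=I_{B(H)}$ for $j\in F$ and $y_{kj}=0$ otherwise. Then $\sum_k y_{kj}^*y_{kj}$ equals $I$ for $j\in F$ and $0$ otherwise, so $\|Y^{(F)}\|_c=1$, and $Y^{(F)}$ is a bounded operator because only finitely many entries are nonzero. The Schur product $Z^{(F)}:=X\square Y^{(F)}$ has only its $i_0$-th row nonzero, with entries $x_{i_0,j}$ for $j\in F$, so $\|Z^{(F)}\|_{\mathrm{op}}^2=\bigl\|\sum_{j\in F}x_{i_0,j}x_{i_0,j}^*\bigr\|$. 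The hypothesis of (iii) forces this to be $\le C^2$. Taking the supremum over $F\in\cf(J)$ (the increasing net of positive operators $\sum_{j\in F}x_{i_0,j}x_{i_0,j}^*$ converges strongly under a uniform norm bound) and then over $i_0\in J$, I conclude $\|\mathrm{diag}(XX^*)\|\le C^2$, i.e., $\|X\|_r\le C$ by Lemma \ref{diaglem}.

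In sum, the proof is a short cascade: (i) by substitution in (\ref{bScb}); (ii), (iv), (v) by trivial norm/order/Cauchy--Schwarz estimates applied to (i); and (iii) by the explicit test matrix above. I do not expect any part to require a calculation longer than a few lines once the identifications of Lemma \ref{diaglem} are invoked.
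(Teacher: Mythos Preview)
Your proposal is correct and follows the paper's own proof essentially line for line: (i) via (\ref{bScb}) and Lemma~\ref{LemmaF}, (ii) via Lemma~\ref{columnnorm}, (iv) via $-I\le F\le I$ and Lemma~\ref{diaglem}, and (v) via Cauchy--Schwarz and Lemma~\ref{diaglem}. The only cosmetic difference is in (iii): the paper tests with the full row $y_{kj}=1$ for all $j\in J$ at once (this $Y$ is column bounded, which is all the hypothesis requires), whereas you restrict to finite $F\subseteq J$ and pass to the supremum; both arguments yield the same conclusion.
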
 
\begin{proof}
For $A, B$ in $M_J(B(H))$ we have 
\begin{align*}
A\square B \, &= \, V^*\l(A)\rho(B)V \text{ by } (\ref{bScb}) \\
&= \, V^*\l(A)F\l(B)F V \text{ by  Lemma } \ref{LemmaF} \text{ (ii)} \\ 
&= \, V^*\l(A)F\l(B) V \text{ by  Lemma } \ref{LemmaF} \text{ (iii)}, \\ 
\end{align*}
and the claim (i) is proved. 

The claim (ii) follows from (i) and the result in Lemma \ref{columnnorm}.

Item (iii) shows that Livshits's inequality determines the row norm and by symmetry the column norm as well. To prove it, let  $ k $ be in $J, $ then we can estimate the norm of the $k$'th row of $X$ via the assumptions made.  We  define the matrix   $Y$ by $y_{ij} = 0$ if $i \neq k, j \in J$ and $y_{kj} = 1 $ for all $j$ in $J. $
Then $Y$ is column bounded with column norm $1$ and the norm of the
 matrix $Z := (x_{ij} y_{ij} ) $
  is exactly the norm of the $k$'th row of $X.$       

The statement in (iv) is a direct consequence of (i), Lemma \ref{diaglem} and the fact that $- \l(A^*A) \leq \l(A^*)F\l(A) \leq \l(A^*A).$

With respect to item (v), we find from the classical Cauchy-Schwarz inequality, the statement in (i) and the  Lemma \ref{diaglem} that 
\begin{align*}
\forall \Xi, \Gamma\in \ell^2(J,\bc)\otimes H   \,\forall A, B & \in \, M_J(B(H)):\\  |\langle (A
\square B) \Xi, \Gamma \rangle|^2 & = |\langle V^*\l(A) F \l(B) V\Xi, \Gamma \rangle |^2 \\
& \leq   \|\l(B)V\Xi\|^2 \|\l(A^*)V \Gamma\|^2  \\
&= \langle V^*\l(B^*B)V\Xi, \Xi\rangle \langle V^*\l(AA^*)V\Gamma, \Gamma \rangle \\
&= \langle  \mathrm{diag}(B^*B)\Xi, \Xi\rangle \langle \mathrm{diag}(AA^*)\Gamma, \Gamma \rangle, \end{align*}
  and by taking square roots 
  \begin{equation*}
\, \, \, \quad \quad  \quad \quad \quad \quad  |\langle (A
\square B) \Xi, \Gamma \rangle| \leq   \|\big(\mathrm{diag}(B^*B)\big)^{1/2}\Xi\| \|\big( \mathrm{diag}(AA^*)\big)^{1/2}\Gamma\|.
\end{equation*}
\end{proof}

\section{An elementary observation}
It is worth to remark, that the statement (iv) in  Theorem \ref{mainT} above implies Livshits' inequality, and that (iv)  is an  easy consequence of the ordinary Cauchy-Schwarz inequality as the few lines of computations below show. Hence the validity of the inequality (iv) may  have been realized by many people before, but may be not linked to the complete boundedness of the Schur product. In fact we find by 2 applications of  Cauchy-Schwarz inequalities for Hilbert spaces and for numbers respectively that for any index set $J,$ vectors $\Xi = ( \xi_j),$ $\Gamma = (\g_j),$ in $\ell^2(J, H)$ and  bounded matrices of operators $A= (a_{ij}) $ and $B= (b_{ij})$ in $M_J(B(H))$ we have
\begin{align*}
 |\langle (A
\square B) \Xi, \Gamma \rangle|^2 & = |\underset{ i,j \in J}{\sum} \langle b_{ij} \xi_j, a_{ij}^* \g_i \rangle |^2 \\
 & \leq \bigg( \underset{ i ,j\in J}{\sum} \| b_{ij} \xi_j\|^2\bigg) \bigg(\underset{ i,j \in J}{\sum} \| a_{ij}^* \g_i\|^2\bigg) \\
 &= \bigg(\underset{ j \in J}{\sum} \langle  \underset{ i \in J}{\sum} b_{ij}^*b_{ij} \xi_j, \xi_j \rangle \bigg) \bigg(\underset{ i \in J}{\sum} \langle \underset{ j \in J}{\sum}  a_{ij}a_{ij}^* \g_i, \g_i \rangle \bigg)\\ 
&= \langle\mathrm{diag}(B^*B)\Xi, \Xi\rangle \langle  \mathrm{diag}(AA^*)\Gamma, \Gamma \rangle.
\end{align*}


\begin{thebibliography}{10}

\bibitem{AS}
G. Aubrun, ‎S. J. Szarek,
{\em Alice and Bob Meet Banach,}
Amer. Math. Soc. Math. Surv. Monographs, 
 {\bf 223} (2017).

\bibitem{Be}
G. Bennett,
{\em Schur multipliers,} Duke Math. J. {\bf 44} (1977) , 603 -- 639. 



\bibitem{Ch}
E.~Christensen,
{\em Commutator Inequalities via Schur Products,} Abel Symp. 2015, 127 -- 144, Springer 2016. 
 


\bibitem{CS}
E.~Christensen, A.~M.~Sinclair,
{\em Representations of Completely Bounded Multilinear Operators,} J.
Funct. Anal. {\bf 72} (1987), 151 -- 181. 




\bibitem{ER}
E.~G. Effros, Z.-J: Ruan,
{\em Operator Spaces, } Oxford UP, 2000. 

\bibitem{Ha}
U. Haagerup,
{\em Decomposition of completely bounded maps on operator algebras ,} Unpublished preprint.  

\bibitem{Ho}
R. A. Horn, {\em The Hadamard product,} 
Proc. Sympos. Appl. Math. {\bf 40} (1990). 87 -- 169.  

\bibitem{HJ}
R. A. Horn, C. R. Johnson, {\em Topics in matrix analysis,} 
Cambridge UP, 1991
 
\bibitem{HM1}
R. A. Horn, R. Mathias, {\em An analog of the Cauchy-Schwarz inequality for Hadamard products and unitarily invariant norms } SIAM J. Matrix Anal. Appl.  {\bf 11} (1990). 481-- 496.  
 
\bibitem{HM2}
R. A. Horn, R. Mathias, {\em Block-matrix generalizations of Schur's basic theorems on Hadamard
products, } Linear Algebra Appl. {\bf 172} (1992). 337-- 346. 


\bibitem{HMN}
 R. A. Horn., R. Mathias, Y. Nakamura, {\em Inequalities for unitarily invariant norms and
bilinear matrix products,} Linear Multilinear Algebra {\bf 30} (1991), 303-- 314.

\bibitem{Li}
L. Livshits, 
{\em Block-matrix generalizations of infinite-dimensional
schur products and schur multipliers,}
 Linear Multilinear Algebra {\bf 38} (1994), 59--78,

\bibitem{Pa}
V. Paulsen,
{\em Completely bounded maps and operator algebras,} Cambridge studies in Math 78, (2002).  

\bibitem{Pi1}
G. Pisier,
{\em Similarity problems and completely bounded maps,} Lecture Notes in Math. {\bf 1618} 2nd. edition, Springer 2001. 



\bibitem{Pi2}
G. Pisier, 
{\em Introduction to  operator space theory,}
Cambridge UP, 2003.


\bibitem{Sc}
I. Schur,
{\em Bemerkungen zur Theorie der beschr{\"a}nkten Bilinearformen mit unendlich vielen Ver{\"a}nderlichen.} J. Reine  Angew. Math., {\bf 140} 1911,  1--28.
\bibitem{Sm}

R. R. Smith, {\em Completely bounded module maps and the Haagerup tensor product,} J. Funct. Anal. {\bf 102} (1991), 156 -- 175.  


\bibitem{WS}
W. Stinespring, {\em Positive functions on C$^*$-algebras,} Proc. Amer. Math. Soc. {\bf 6} (1966), 211--216.

\end{thebibliography}
\end{document}